\numberwithin{equation}{section}
\newtheorem{theorem}[equation]{Theorem}
\newtheorem{proposition}[equation]{Proposition}
\newtheorem{lemma}[equation]{Lemma}
\theoremstyle{definition}
\newtheorem{remark}[equation]{Remark}
\newtheorem{example}[equation]{Example}
\def\N{\mathbb N}
\def\R{\mathbb R}
\def\C{\mathbb C}
\def\tol{\textup{tol}_0}
\DeclareMathOperator{\arcoth}{arcoth}
\DeclareMathOperator{\arccot}{arccot}
\begin{document}

\title[Third-order method based on quadratic Taylor approximation]{An explicit third-order one-step method for autonomous scalar initial value problems of first order based on adaptive quadratic Taylor approximation}

\author{Thomas Krainer}
\address{Penn State Altoona\\ 3000 Ivyside Park \\ Altoona, PA 16601-3760}
\email{krainer@psu.edu}

\author{Chenzhang Zhou}
\address{Penn State Altoona\\ 3000 Ivyside Park \\ Altoona, PA 16601-3760}
\email{cjz5145@psu.edu}

\begin{abstract}
We present an explicit $1$-step numerical method of third order that is error-free on autonomous scalar Riccati equations such as the logistic equation. The method replaces the differential equation by its quadratic Taylor polynomial in each step and utilizes the exact solution of that equation for the calculation of the next approximation.
\end{abstract}

\subjclass[2010]{65L05}

\maketitle


\section{Introduction}

\noindent
One of the basic ordinary differential equations in quantitative population dynamics is the logistic differential equation
$$
\left\{
\begin{aligned}
\dot{y} &= ry\Bigl(1-\frac{y}{K}\Bigr) - qy, \\
y\big|_{t=0} &= y_0,
\end{aligned}
\right.
$$
where $y=y(t)$ is the size of the population at time $t \geq 0$, $r > 0$ is the maximal growth rate for the population, $K > 0$ the carrying capacity of the habitat for the population under study. We modified the equation in this example by a harvesting term with harvesting rate $q > 0$ as it appears, for example, in fishery models. We refer to \cite{Thieme} as a general reference for ordinary differential equations models in ecology. While the logistic model, as well as its variations and perturbations, are classical cornerstones of ecological quantitative modeling, it is remarkable that the standard numerical methods for approximating solutions to ordinary differential equations do not solve the logistic equation error-free. Motivated by this observation, we are presenting here an explicit third-order $1$-step numerical method that is applicable to scalar autonomous initial value problems of the form
\begin{equation}\label{GenericODE}
\left\{
\begin{aligned}
\dot{y} &= f(y) \\
y\big|_{t=0} &= y_0
\end{aligned}
\right.
\end{equation}
with sufficiently smooth real-valued $f$ that approximates the solution $y=y(t)$ on the compact interval $[0,T]$ by a sequence of values $y_0,y_1,y_2,\ldots$ based on equidistant time-stepping with step size $h > 0$, and whose distinguishing feature is that the method is error-free if $f$ is a polynomial up to degree two such as in the logistic equation, i.e., our method solves autonomous Riccati equations exactly. The idea for this method is simple:
\begin{enumerate}
\item Replace $f(y)$ in \eqref{GenericODE} by its quadratic Taylor polynomial $T_{y_0}(y)$ centered at $y_0$.
\item Solve $\dot{u} = T_{y_0}(u)$ exactly with initial condition $u(0) = y_0$.
\item Set $y_1 = u(h)$ and repeat with $y_1$ in place of $y_0$, etc.
\end{enumerate}
There are several issues that arise upon implementation of this basic idea. Most importantly, it must be noted that solutions to Riccati equations can blow up in finite time, so integrity checks on the step size $h > 0$ are needed to preclude a potential blow-up of the approximate solution $u$ on the interval $(0,h]$ as otherwise the calculated term $y_1$ is invalid (and likewise in subsequent steps). To make this more transparent, consider the following example:
$$
\left\{
\begin{aligned}
\dot{y} &= (y-\lambda)(1-y)e^{-y^4}, \\
y\big|_{t=0} &= 0,
\end{aligned}
\right.
$$
where $\lambda \gg 1$. The maximal solution to this differential equation exists on $(-\infty,\infty)$ because the function
$$
f(y) = (y-\lambda)(1-y)e^{-y^4}, \; -\infty < y < \infty,
$$
is bounded. However, the maximal solution to $\dot{u} = T_{0}(u) = (u-\lambda)(1-u)$ with initial value $u(0) = 0$ blows up at $t = \frac{\ln(\lambda)}{\lambda - 1}$. In view of $\lim\limits_{\lambda \to \infty}\frac{\ln(\lambda)}{\lambda - 1} = 0$ we see that blow-up does occur on $(0,h)$ if $\lambda \gg 1$ is large enough.

We provide two options to deal with this problem in the implementation, a priori or at run time. The a priori option calculates a threshold for how small the step size $h > 0$ ought to be chosen at the outset to avoid invalid approximating terms throughout and is based on the differential equation \eqref{GenericODE} and the viewing window $[0,T]\times[y_{\min},y_{\max}]$ where its solution is supposed to be approximated as inputs, while the run time option checks validity of each approximation value at the time when it is calculated. The issue of blow-up is germane to the method we discuss in this note, it does not occur in standard Runge-Kutta methods or exponential integrators.

A second issue that we needed to address in the implementation concerns evaluation of the formula for the approximate solution $u$ itself. If the roots of the Taylor polynomial $T_{y_0}$ are distinct but close, the exact formula for $u$ would require evaluation of quotients nearly of the form $\frac{0}{0}$ (but with defined limiting value corresponding to the double-root case). We deal with this by introducing a tolerance parameter $0 < \tol \ll 1$ and replace evaluation of the exact formula for $u$ by appropriate expansions once the critical expressions fall under tolerable thresholds. Problems of similar kind are well-known to arise elsewhere in numerical ODEs, for example in exponential integrators where evaluation of $\phi_1(z) = \frac{e^z-1}{z}$ for $z$ near zero occurs, see \cite{HochbruckOstermann,KassamTrefethen}.

\vspace*{1em}

\noindent
The general idea of utilizing zeroth- and first-order Taylor approximations in the differential equation is well-established both in theoretical and computational ODEs. In computational ODEs, adaptive first-order Taylor approximation (linearization) is the basis for exponential integrators, classically rooted in the Rosen\-brock-Euler method (observe that adaptive zeroth-order Taylor approximation in the differential equation yields the Euler method). Exponential integrators \cite{HochbruckOstermann} have been widely used for stiff problems over the past 30 years; they are generally more effective for these problems than standard Runge-Kutta methods because the linearization of the differential equation is solved exactly. Since the theoretical underpinning for exponential integrators is linear theory, they have been developed into a versatile family of methods applicable to single equations and systems alike. The method we present in this note based on adaptive quadratic Taylor approximation of the differential equation is qualitatively more accurate than methods rooted in linearization, but does not exhibit the same degree of versatility and universality, and the applicability is strictly limited to autonomous scalar equations. The reason is that ordinary differential equations with quadratic nonlinearities generally do not allow for closed solution formulas, the autonomous case of a single unknown function being an exception.

We note that our work relates to nonstandard finite difference models and their applications to numerical ODEs as pioneered by Mickens \cite{Mickens94,Mickens00}, see also \cite{Patidar}. In particular, exact nonstandard finite difference models for the logistic equation and many other ODEs where explicit solution formulas are available are well-known \cite{VigoAguiarRamos}.

\vspace*{1em}

\noindent
The paper is structured as follows: Section~\ref{SectionTaylorMethod} covers the theoretical part. We prove, more generally than what has been stated above, that when the function $f$ in \eqref{GenericODE} is adaptively replaced by its $r$-th order Taylor polynomial and the exact solution to the modified ODE is used to calculate the next approximating value, we obtain a well-defined convergent explicit numerical method of order $r+1$. More precisely, when the exact solution is supposed to be approximated in the window $[0,T]\times[y_{\min},y_{\max}]$, we show that there is a threshold $h_0 > 0$ such that the method is defined everywhere in that window for step sizes $0 < h < h_0$ and allows calculation of the next approximating value to the solution. This qualitatively addresses the aforementioned blow-up issue (that is not present for $r=0$ and $r=1$ of course). The proofs utilize some results about ODEs depending on parameters and an abstract theorem about the convergence of $1$-step methods, stated in the needed forms in Appendices~\ref{ConvergenceOneStepMethods} and \ref{ODEsDependingOnParameters}.

Section~\ref{QuadraticTaylor} contains the core of this paper. We discuss the formulas of the method based on quadratic Taylor approximation and their adjustments based on the aforementioned tolerance considerations, the quantitative a priori as well as run time aspects of step size control to address the blow-up issue, and discuss in detail the numerical algorithms. The MATLAB code of the programs is listed in Appendix~\ref{SourceCode}.

Section~\ref{TestsQuadraticTaylor} contains the results of numerical tests of the method, using MATLAB, with benchmarks against some Runge-Kutta methods of orders $3$ and $4$, respectively. We have tested the quadratic Taylor method on some standard equations from population dynamics, in line with our original motivation, as well as other equations. Our results on the tested equations confirm that the method based on quadratic Taylor expansion can fare better on the global error by several orders of magnitude when compared to the tested Runge-Kutta methods.

\vspace*{1em}

\noindent
As was mentioned before, we only consider equidistant time-stepping in this paper. We also do not utilize any extrapolation techniques to further improve our method. There are certainly several avenues of investigation, in parallel to established ones for standard numerical methods, that could be pursued to augment the method presented in this paper and improve it further. However, the fact that general Riccati equations do not allow for closed solution formulas is going to remain a limiting factor.


\section{Convergence of explicit methods based on exactly solving Taylor approximations of the differential equation}\label{SectionTaylorMethod}

\noindent
Let $r \in {\mathbb N}_0$, and let $f : D \to \R$ be $(r+1)$-times continuously differentiable on the open set $D \subset \R$, and suppose $y : [0,T] \to D$ solves the initial value problem
\begin{equation}\label{IVP}
\left\{
\begin{aligned}
\dot{y}(t) &= f(y(t)) \textup{ on } 0 \leq t \leq T, \\
y\big|_{t=0} &= y_0 \in D.
\end{aligned}
\right.
\end{equation}
As mentioned in the introduction, and idea for an explicit method is to locally replace $f$ by its $r$-th order Taylor polynomial and take the exact solution of the resulting differential equation with the Taylor polynomial instead of $f$ as numerical approximation for $y : [0,T] \to D$ over small time steps. To pursue this idea, define $F : \R\times D \to \R$ via
\begin{equation}\label{TaylorF}
F(w,y) = \sum\limits_{j=0}^{r} \frac{f^{(j)}(y)}{j!}w^j,
\end{equation}
and let $w(h,y)$ for $(h,y) \in U_{\max}$ be the maximally extended solution of
$$
\left\{
\begin{aligned}
\frac{\partial w}{\partial h}(h,y) &= F(w(h,y),y), \\
w(0,y) &= 0.
\end{aligned}
\right.
$$
This differential equation for $w$ depends on $y$ as a parameter, and we have summarized some results about differential equations depending on parameters that we will use below in Appendix~\ref{ODEsDependingOnParameters}. Since $F$ and all its partial $w$-derivatives are continuously differentiable with respect to $(w,y)$ in $\R\times D$, we obtain that $\partial_h^k w$ is continuously differentiable with respect to $(h,y) \in U_{\max}$ for all $k \in \N_0$. Define $\Phi : U_{\max} \to \R$ via 
\begin{equation}\label{TaylorMethod}
\Phi(h,y) = y + w(h,y).
\end{equation}
Observe that $\Phi$ solves
$$
\left\{
\begin{aligned}
\frac{\partial \Phi}{\partial h}(h,y) &= \sum\limits_{j=0}^{r} \frac{f^{(j)}(y)}{j!}\bigl(\Phi(h,y)-y\bigr)^j \\
\Phi(0,y) &= y.
\end{aligned}
\right.
$$

\begin{proposition}\label{TaylorPhiProperties}
$\Phi$ and all its partial $h$-derivatives are continuously differentiable with respect to $(h,y) \in U_{\max}$. For every compact subset $K \Subset D$ there exists $h_0 > 0$ such that $\Phi : [0,h_0] \times K \to \R$ is defined, and $\frac{\partial \Phi}{\partial h} : [0,h_0] \times K \to \R$ satisfies a Lipschitz condition with respect to $y$ in $[0,h_0] \times K$.
\end{proposition}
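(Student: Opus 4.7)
The plan is to deduce all three assertions from smoothness and openness of the maximal domain $U_{\max}$, relying on the regularity of $w$ already recorded in the excerpt (obtained via Appendix~\ref{ODEsDependingOnParameters}). The first assertion is immediate from $\Phi(h,y) = y + w(h,y)$: since $\partial_h^k w$ is continuously differentiable on $U_{\max}$ for every $k \in \N_0$, the same is true of $\partial_h^k \Phi$ (the addition of $y$ only affects the case $k=0$ and is itself smooth).

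For the second assertion I would invoke that $U_{\max}$ is open in $\R \times D$ (also standard, from Appendix~\ref{ODEsDependingOnParameters}). Since $\{0\} \times K$ is compact and contained in $U_{\max}$, a routine tube argument produces $h_0 > 0$ with $[0,h_0] \times K \subset U_{\max}$: for each $y \in K$ pick an open rectangle $(-\epsilon_y, \epsilon_y) \times I_y \subset U_{\max}$ containing $(0,y)$, extract a finite subcover $I_{y_1},\ldots,I_{y_n}$ of $K$, and set $h_0 = \min_j \epsilon_{y_j}$. Then $\Phi$ is defined on $[0,h_0] \times K$.

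The third assertion is where I expect the main obstacle. From the first assertion $\partial_h \Phi$ is $C^1$ on $U_{\max}$, so $\partial_y \partial_h \Phi$ exists and is continuous; after thickening $[0,h_0] \times K$ into an open tube $W$ with $W \Subset U_{\max}$ I can assume $|\partial_y \partial_h \Phi| \leq L$ on $W$ for some constant $L$. The awkward point is that $K \subset \R$ need not be convex, so the mean value theorem does not apply directly to arbitrary pairs $y_1, y_2 \in K$. I would handle this by covering $K$ with finitely many open intervals $I_1,\ldots,I_n$ such that $[0,h_0] \times I_j \subset W$; MVT on each convex strip yields $|\partial_h \Phi(h,y_1) - \partial_h \Phi(h,y_2)| \leq L|y_1 - y_2|$ whenever $y_1, y_2$ lie in a common $I_j$, and the Lebesgue number lemma supplies $\delta > 0$ below which this containment is automatic. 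For the remaining pairs with $|y_1 - y_2| \geq \delta$, continuity of $\partial_h \Phi$ on the compact set $[0,h_0] \times K$ produces a bound $M$ and hence the trivial estimate $|\partial_h \Phi(h,y_1) - \partial_h \Phi(h,y_2)| \leq 2M \leq (2M/\delta)|y_1 - y_2|$. Combining the two cases yields a uniform Lipschitz constant in $y$ of $\max(L, 2M/\delta)$.
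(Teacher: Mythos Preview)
Your argument is correct and follows the same line as the paper: regularity of $\Phi$ is inherited from that of $w$ via Theorem~\ref{ODEParameters}, the existence of $h_0$ comes from openness of $U_{\max}$ and compactness of $\{0\}\times K$ (this is exactly Remark~\ref{RemarkODEParameters}), and the Lipschitz condition follows from continuity of the mixed partial $\partial_y\partial_h\Phi$. The paper's proof simply asserts the last step without addressing the possible non-convexity of $K$, so your tube/Lebesgue-number argument is more careful than what the authors actually wrote, but not a different approach.
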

\begin{proof}
By Theorem~\ref{ODEParameters} and Remark~\ref{RemarkODEParameters}, $w$ and all its partial $h$-derivatives exist and are continuously differentiable on $U_{\max}$, and for every $K \Subset D$ there exists $h_0 > 0$ such that  $w : [0,h_0] \times K \to \R$ is defined. All this is therefore also true for $\Phi$. Since $\frac{\partial^2\Phi}{\partial y\partial h} : U_{\max} \to \R$ exists and is continuous, $\frac{\partial\Phi}{\partial h} : [0,h_0] \times K \to \R$ satisfies a Lipschitz condition with respect to $y$ as claimed.\end{proof}

\begin{proposition}[Local Truncation Error]\label{LocalTruncationError}
For any compact neighborhood $K \Subset D$ with $y([0,T]) \subset \mathring{K}$ there exist $h_0 > 0$ such that $\Phi : [0,h_0] \times K \to \R$ is defined, and a constant $C \geq 0$ independent of $0 \leq h \leq h_0$ and $0 \leq t \leq T$ such that
\begin{equation*}
\bigl|y(t+h) - \Phi(h,y(t))\bigr| \leq C h^{r+2}
\end{equation*}
whenever $0 \leq t+h \leq T$.

If $f$ is a polynomial of degree $\leq r$ we have $y(t+h) = \Phi(h,y(t))$, i.e., the method is locally exact.
\end{proposition}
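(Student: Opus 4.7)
The plan is to compare Taylor expansions in $h$ of $\Phi(h, y(t))$ and $y(t+h)$, to show that their first $r+1$ derivatives at $h=0$ coincide, and then to bound the remainder uniformly in $t$. To set up, I first fix a compact set $K \Subset D$ with $y([0,T]) \subset \mathring K$ and invoke Proposition~\ref{TaylorPhiProperties} to get $h_0 > 0$ so that $\Phi : [0, h_0] \times K \to \R$ is defined; shrinking $h_0$ if needed (using uniform continuity of $y$), I also arrange that $y(t+s) \in K$ whenever $0 \le t \le T$, $0 \le s \le h_0$, and $t + s \le T$. For such $t$ and $0 \le h \le \min(h_0, T - t)$, define the scalar error function
$$
\phi(s) = \Phi(s, y(t)) - y(t + s), \qquad 0 \le s \le h.
$$
The goal is to show $\phi^{(k)}(0) = 0$ for $k = 0, 1, \ldots, r+1$ and then apply Taylor's theorem with integral remainder.

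The main step, and the one I expect to require the most care, is the derivative matching. Setting $y_\ast = y(t)$, the function $u(s) := \Phi(s, y_\ast)$ satisfies $\dot u = T_{y_\ast}(u)$, $u(0) = y_\ast$, where $T_{y_\ast}$ is the $r$-th order Taylor polynomial of $f$ centered at $y_\ast$; likewise $\tilde y(s) := y(t + s)$ satisfies $\dot{\tilde y} = f(\tilde y)$, $\tilde y(0) = y_\ast$. Iterated application of the chain rule to a generic autonomous ODE $\dot z = g(z)$ with $z(0) = z_0$ expresses $z^{(k)}(0)$ as a fixed polynomial $P_k$ (depending only on $k$) in the values $g(z_0), g'(z_0), \ldots, g^{(k-1)}(z_0)$. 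Since $T_{y_\ast}^{(j)}(y_\ast) = f^{(j)}(y_\ast)$ for $0 \le j \le r$, substituting $g = T_{y_\ast}$ and $g = f$ yields the same value $P_k\bigl(f(y_\ast), \ldots, f^{(k-1)}(y_\ast)\bigr)$ whenever $k \le r+1$; hence $u^{(k)}(0) = \tilde y^{(k)}(0)$, i.e., $\phi^{(k)}(0) = 0$, for $0 \le k \le r + 1$.

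Taylor's theorem then yields
$$
\phi(h) = \int_0^h \frac{(h - s)^{r+1}}{(r + 1)!}\, \phi^{(r + 2)}(s) \, ds,
$$
so it remains to bound $\phi^{(r+2)}$ uniformly. By Proposition~\ref{TaylorPhiProperties}, $\partial_h^{r+2} \Phi$ is continuous on $U_{\max}$, hence bounded on the compact set $[0, h_0] \times K$; and the standard bootstrap of $\dot y = f(y)$ with $f \in C^{r+1}$ gives $y \in C^{r+2}([0, T])$, so $y^{(r+2)}$ is bounded on $[0, T]$. Combining these bounds yields a constant $C \ge 0$ independent of $t$ and $h$ with $|\phi(h)| \le C h^{r+2}$. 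For the exactness claim, if $f$ is a polynomial of degree at most $r$ then $T_{y_\ast} \equiv f$, so $u$ and $\tilde y$ satisfy the same initial value problem, and uniqueness of solutions forces $\Phi(h, y(t)) = y(t + h)$.
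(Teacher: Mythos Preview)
Your proof is correct and follows the same overall strategy as the paper: show that the Taylor coefficients of $h\mapsto y(t+h)$ and $h\mapsto\Phi(h,y(t))$ at $h=0$ agree up to order $r+1$, then apply Taylor's formula with integral remainder and bound the $(r+2)$-nd derivatives uniformly by compactness. The exactness argument for polynomial $f$ is identical.

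The one notable difference is in how the derivative matching is carried out. The paper works this out explicitly and inductively via Fa{\`a} di Bruno's formula, verifying at each level that the partial Bell polynomials receive the same arguments on both sides. Your argument is more conceptual: you observe once that for any autonomous scalar equation $\dot z=g(z)$ the quantity $z^{(k)}(0)$ is a universal polynomial $P_k$ in $g(z_0),g'(z_0),\ldots,g^{(k-1)}(z_0)$, and then simply note that $T_{y_\ast}^{(j)}(y_\ast)=f^{(j)}(y_\ast)$ for $0\le j\le r$. This is cleaner and avoids the bookkeeping of the Bell polynomials, at the modest cost of leaving the inductive existence of the $P_k$ implicit (a one-line induction on $k$ suffices). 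Either way the content is the same. One small remark: the step where you shrink $h_0$ to keep $y(t+s)\in K$ is unnecessary, since $y([0,T])\subset K$ already guarantees this whenever $t+s\in[0,T]$.
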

\begin{proof}
Recall that if $u$ and $v$ are $n$-times differentiable, $n \in \N$, Fa{\`a} di Bruno's formula asserts that
$$
\frac{d^n}{dh^n}(u\circ v)(h) = \sum\limits_{k=1}^n u^{(k)}(v(h))B_{n,k}(v'(h),v''(h),\ldots,v^{(\mu^n_k)}(h))
$$
with the partial Bell polynomials
$$
B_{n,k}(x_1,\ldots,x_{\mu^n_k}) = \sum_{\substack{\alpha\in\N_0^{\mu^n_k}, \;|\alpha|=k \\ 1\alpha_1+2\alpha_2+\ldots+\mu^n_k\alpha_{\mu^n_k} = n}} \frac{n!}{\alpha!} \cdot \Bigl(\frac{x_1}{1!}\Bigr)^{\alpha_1}\Bigl(\frac{x_2}{2!}\Bigr)^{\alpha_2}\cdots\Bigl(\frac{x_{\mu^n_k}}{\mu^n_k!}\Bigr)^{\alpha_{\mu^n_k}},
$$
where $\mu^n_k = n-k+1$. We now proceed to use this formula in order to show inductively that
\begin{equation}\label{Taylorcoeff}
\frac{d^n}{dh^n}y(t+h)\Big|_{h=0} = \frac{\partial^n}{\partial h^n}\Phi(h,y(t))\Big|_{h=0}
\end{equation}
for $n = 0,\ldots,r+1$ (note that $y \in C^{r+2}([0,T])$ since $f \in C^{r+1}(D)$ by assumption). For $n=0$ this follows immediately from the definition in \eqref{TaylorMethod}, keeping in mind that $w(0,y) = 0$. For $n = 1$ we have
\begin{align*}
\frac{d}{dh}y(t+h)\Big|_{h=0} &= f\bigl(y(t+h)\bigr)\Big|_{h=0} = f(y(t)), \\
\frac{\partial}{\partial h}\Phi(h,y(t))\Big|_{h=0} &= F(w(h,y(t)),y(t))\Big|_{h=0} = F(0,y(t)) = f(y(t)).
\end{align*}
So suppose we know \eqref{Taylorcoeff} for all $n \leq n_0$ for some $1 \leq n_0 \leq r$. Now
\begin{align*}
\frac{d^{n_0+1}}{dh^{n_0+1}}y(t+h) &= \frac{d^{n_0}}{dh^{n_0}}\Bigl(\frac{d}{dh}y(t+h)\Bigr) = \frac{d^{n_0}}{dh^{n_0}}(f\circ y)(t+h) \\
&= \sum\limits_{k=1}^{n_0} f^{(k)}(y(t+h))B_{n_0,k}(y'(t+h),y''(t+h),\ldots,y^{(\mu^{n_0}_k)}(t+h)).
\end{align*}
Evaluation at $h=0$ gives
\begin{equation}\label{n01y}
\frac{d^{n_0+1}}{dh^{n_0+1}}y(t+h)\Big|_{h=0} = \sum\limits_{k=1}^{n_0} f^{(k)}(y(t))B_{n_0,k}(y'(t),y''(t),\ldots,y^{(\mu^{n_0}_k)}(t)).
\end{equation}
Using the differential equation for $w(h,y)$ and \eqref{TaylorMethod} we get
$$
\frac{\partial^{n_0+1}}{\partial h^{n_0+1}}\Phi(h,y(t))\Big|_{h=0} = \frac{\partial^{n_0}}{\partial h^{n_0}}F(w(h,y(t)),y(t))\Big|_{h=0},
$$
which by Fa{\`a} di Bruno's formula equals
\begin{equation}\label{n01Phi}
\sum\limits_{k=1}^{n_0} \bigl(\partial_w^kF\bigr)(w(0,y(t)),y(t))B_{n_0,k}\bigl((\partial_hw)(0,y(t)),\ldots,(\partial_h^{\mu^{n_0}_k}w)(0,y(t))\bigr).
\end{equation}
By induction, $(\partial_h^jw)(0,y(t)) = y^{(j)}(t)$ for $j=1,\ldots,\mu^{n_0}_k$, and thus the arguments in the partial Bell polynomials $B_{n_0,k}$ in \eqref{n01y} and \eqref{n01Phi} agree. Moreover,
$$
\bigl(\partial_w^kF\bigr)(w(0,y(t)),y(t)) = \bigl(\partial_w^kF\bigr)(0,y(t))= f^{(k)}(y(t))
$$
for $k=0,\ldots,r$ in view of \eqref{TaylorF} and Taylor's formula. This shows that \eqref{Taylorcoeff} holds for $n=n_0+1$ and finishes the induction.

In view of \eqref{Taylorcoeff}, Taylor's formula now implies
\begin{gather*}
\bigl|y(t+h) - \Phi(h,y(t))\bigr| = \Bigl|\int_0^h \frac{y^{(r+2)}(t+s)-\bigl(\partial_h^{r+2}\Phi\bigr)(s,y(t))}{(r+1)!}(h-s)^{r+1}\,ds\Bigr| \\
\leq \underbrace{\Bigl[\frac{1}{(r+2)!}\Bigl(\max\limits_{0\leq s \leq T}|y^{(r+2)}(s)| + \max\limits_{\substack{0 \leq s \leq h_0 \\ y \in K}}|(\partial_h^{r+2}\Phi)(s,y)|\Bigr)\Bigr]}_{=:C} \cdot h^{r+2}
\end{gather*}
for all $0 \leq t \leq T$ and $0 \leq h \leq h_0$ such that $t+h \leq T$.

If $f$ is a polynomial of degree $\leq r$ we have
$$
f(z) = \sum\limits_{j=0}^r \frac{f^{(j)}(y)}{j!}(z-y)^j
$$
for all $z,y \in \R$, and consequently both $h \mapsto y(t+h)$ and $h \mapsto \Phi(h,y(t))$ solve
$$
\left\{
\begin{aligned}
\dot{u}(h) &= f(u(h)), \quad h \geq 0, \\
u\big|_{h=0} &= y(t).
\end{aligned}
\right.
$$
By uniqueness we must therefore have $y(t+h) = \Phi(h,y(t))$.
\end{proof}

\begin{theorem}\label{TaylorConvergent}
The method $\Phi$ defined in \eqref{TaylorMethod} is a convergent method of order $r+1$ for the approximation of the solution $y : [0,T] \to \R$ of \eqref{IVP}. The method is exact for differential equations \eqref{IVP} when $f$ is a polynomial of degree at most $r$.
\end{theorem}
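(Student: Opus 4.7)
The plan is to reduce this statement directly to the abstract convergence theorem for one-step methods stated in Appendix~\ref{ConvergenceOneStepMethods}, feeding it the two ingredients that have already been assembled in the preceding propositions: the local truncation error bound from Proposition~\ref{LocalTruncationError} and the Lipschitz property from Proposition~\ref{TaylorPhiProperties}. No new analytical work should be required beyond checking that the hypotheses of that abstract theorem are met.

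Concretely, I would first fix a compact neighborhood $K \Subset D$ with $y([0,T]) \subset \mathring{K}$. By Proposition~\ref{TaylorPhiProperties} there exists $h_0 > 0$ such that $\Phi : [0,h_0] \times K \to \R$ is defined and $\partial_h \Phi$ satisfies a Lipschitz condition with respect to $y$ on $[0,h_0] \times K$; by Proposition~\ref{LocalTruncationError}, shrinking $h_0$ if necessary, the local truncation error satisfies $|y(t+h) - \Phi(h,y(t))| \leq C h^{r+2}$ uniformly for admissible $t,h$ with $0 \leq t \leq t+h \leq T$ and $0 \leq h \leq h_0$. These are precisely the hypotheses of the abstract one-step convergence theorem, whose conclusion is a global error bound of order $h^{r+1}$ on $[0,T]$, i.e., convergence of order $r+1$ in the sense required.

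For the exactness statement when $f$ is a polynomial of degree at most $r$, Proposition~\ref{LocalTruncationError} already supplies the exact one-step identity $y(t+h) = \Phi(h,y(t))$ for every admissible pair $(t,h)$. Starting from $y_0 = y(0)$ and iterating with step size $h$, a direct induction on the grid index $n$ yields $y_n = y(t_n)$ at every grid point, so the method reproduces the exact solution.

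The main technical point that needs care is ensuring that the iterates $y_n$ actually remain inside $K$ throughout the computation when $h$ is sufficiently small, since $\Phi$ is only guaranteed to be defined and well-behaved there. This is absorbed into the standard proof of the abstract one-step convergence theorem: a discrete Gronwall-type estimate driven by the Lipschitz constant and the local error bound simultaneously controls $|y_n - y(t_n)|$ and confines $y_n$ to a shrinking tube around $y([0,T]) \subset \mathring{K}$. With this issue handled inside the abstract theorem, the remainder of the proof is essentially a bookkeeping exercise in invoking the results already proved.
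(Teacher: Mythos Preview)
Your proposal is correct and follows exactly the paper's approach: the paper's proof is the single sentence ``This follows with Propositions~\ref{TaylorPhiProperties} and \ref{LocalTruncationError} from Theorem~\ref{ConvergenceTheoremGeneral},'' and you have simply spelled out how those pieces slot into the hypotheses of the abstract convergence theorem. The only item you did not name explicitly is the consistency hypothesis $\Phi(0,y)=y$ and $\partial_h\Phi(0,y)=f(y)$, but this is immediate from the definitions (and the paper does not isolate it either).
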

\begin{proof}
This follows with Propositions~\ref{TaylorPhiProperties} and \ref{LocalTruncationError} from Theorem~\ref{ConvergenceTheoremGeneral}.
\end{proof}

\begin{example}
If we specialize to $r=0$ and $r=1$ in \eqref{TaylorF} we find familiar methods.
\begin{itemize}
\item If $r=0$ we have $F(w,h) = f(y)$ in \eqref{TaylorF}, and so $\Phi(h,y)$ solves
$$
\left\{
\begin{aligned}
\frac{\partial \Phi}{\partial h}(h,y) &= f(y) \\
\Phi(0,y) &= y.
\end{aligned}
\right.
$$
Thus $\Phi(h,y) = y + hf(y)$ is the Euler method.
\item If $r=1$ we have $F(w,y) = f(y) + f'(y)w$ in \eqref{TaylorF}, and so $\Phi(h,y)$ solves
$$
\left\{
\begin{aligned}
\frac{\partial \Phi}{\partial h}(h,y) &= f(y) + f'(y)\bigl(\Phi(h,y)-y) \\
\Phi(0,y) &= y.
\end{aligned}
\right.
$$
Thus $\Phi(h,y) = y + h\phi_1\bigl(f'(y)h\bigr)f(y)$ with
\begin{equation*}
\phi_1(z) = \begin{cases}
\dfrac{e^z - 1}{z} &\textup{for } z \neq 0 \\
1 &\textup{for }z = 0
\end{cases}
\end{equation*}
is the Rosenbrock-Euler method \cite[Section~2.4]{HochbruckOstermann}.
\end{itemize}
\end{example}

In this paper, we present and analyze the method based on adaptive Taylor approximation in detail for $r = 2$. While the theoretical result in Theorem~\ref{TaylorConvergent} holds for all $r \in {\mathbb N}_0$, it is not feasible for the implementation of methods for larger $r$ as one generally does not have explicit solution formulas for polynomial ordinary differential equations.


\section{Third order scheme based on quadratic Taylor approximation}\label{QuadraticTaylor}

\noindent
We begin by defining and analyzing the analytic function
\begin{equation}\label{psiuv}
\psi(u,v) = \frac{\sinh(v)}{u\sinh(v) + v\cosh(v)}
\end{equation}
depending on two complex variables $(u,v) \in \C^2$. Initially, this function is undefined on
$$
S = \{(u,v) \in \C^2;\; u\sinh(v) + v\cosh(v) = 0\}.
$$
It is easy to see that $S$ consists of the complex line $v = 0$ and the complex surface
$$
S_{\psi} : u = -v\coth(v).
$$
The singularities of $\psi$ where $v = 0$ are removable, except for the singularity at the single branch point $(-1,0)$. To see this note that for $v$ near $0$ we can write
$$
\psi(u,v) = \frac{1}{u + v\coth(v)}.
$$
The function $v \mapsto v\coth(v)$ has a removable singularity at $v = 0$. The first few terms of the Taylor series are
$$
v\coth(v) = 1 + \frac{v^2}{3} - \frac{v^4}{45} + O(v^6),
$$
which shows that $\psi$ has a removable singularity at all points $(u,0)$ if $u \neq -1$. More precisely, we get
\begin{equation}\label{psiexpansion}
\psi(u,v) = \frac{1}{1+u} - \frac{v^2}{3(1+u)^2} + \frac{(u+6)v^4}{45(1+u)^3} + O(v^6)
\end{equation}
locally uniformly in $u$, and, in particular, we see that the definition
\begin{equation}\label{psiline}
\psi(u,0) = \frac{1}{1+u}, \quad u \neq -1,
\end{equation}
extends $\psi$ analytically to $v=0$ except the branch point. From \eqref{psiuv} and \eqref{psiline} we obtain that $\psi$ remains singular on $S_{\psi} : u = -v\coth(v)$, but now with the understanding that the singularity of $v\coth(v)$ when $v = 0$ has been removed. We are going to need the function $\psi$ only in the cases that both $u$ and $v$ are real, or that $u$ is real and $v$ is imaginary. Figure~\ref{SingularSet} shows parts of the intersection of the singular set $S_{\psi}$ with ${\mathbb R}^2$ and with ${\mathbb R}\times i{\mathbb R}$, respectively.

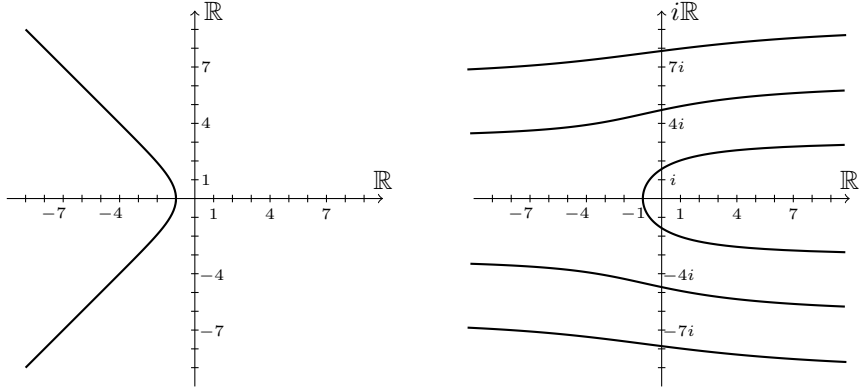
\begin{figure}[ht]
\begin{minipage}{6cm}
\begin{tikzpicture}[scale=0.25]
\draw[->] (-10,0) -- (10,0) node[above] {${\mathbb R}$};
\draw[->] (0,-10) -- (0,10) node[right] {${\mathbb R}$};
\foreach \x in {-9,-8,-7,-6,-5,-4,-3,-2,-1,1,2,3,4,5,6,7,8,9}
	{
	\draw (\x,-0.2) -- (\x,0.2);
	\draw (-0.2,\x) -- (0.2,\x);
	}
\node at (1,-.8) {\tiny $1$};
\node at (4,-.8) {\tiny $4$};
\node at (7,-.8) {\tiny $7$};
\node at (-4.5,-.8) {\tiny $-4$};
\node at (-7.5,-.8) {\tiny $-7$};
\node at (0.6,1) {\tiny $1$};
\node at (0.6,4) {\tiny $4$};
\node at (0.6,7) {\tiny $7$};
\node at (0.9,-4.1) {\tiny $-4$};
\node at (0.9,-7.1) {\tiny $-7$};
\draw[domain=-9:9,thick,samples=100] plot ({-(1)*(\x)*(cosh((\x))/sinh((\x)))},\x);
\end{tikzpicture}
\end{minipage}
\begin{minipage}{6cm}
\begin{tikzpicture}[scale=0.25]
\draw[->] (-10,0) -- (10,0) node[above] {${\mathbb R}$};
\draw[->] (0,-10) -- (0,10) node[right] {$i{\mathbb R}$};
\foreach \x in {-9,-8,-7,-6,-5,-4,-3,-2,-1,1,2,3,4,5,6,7,8,9}
	{
	\draw (\x,-0.2) -- (\x,0.2);
	\draw (-0.2,\x) -- (0.2,\x);
	}
\node at (1,-.8) {\tiny $1$};
\node at (4,-.8) {\tiny $4$};
\node at (7,-.8) {\tiny $7$};
\node at (-1.5,-.8) {\tiny $-1$};
\node at (-4.5,-.8) {\tiny $-4$};
\node at (-7.5,-.8) {\tiny $-7$};
\node at (0.6,1) {\tiny $i$};
\node at (0.8,4) {\tiny $4i$};
\node at (0.8,7) {\tiny $7i$};
\node at (0.9,-4.1) {\tiny $-4i$};
\node at (0.9,-7.1) {\tiny $-7i$};
\draw[domain=-2.857:2.857,thick,samples=100] plot ({-(1)*(\x)*cot((\x) r)},\x);
\draw[domain=3.47:5.75,thick,samples=100] plot ({-(1)*(\x)*cot((\x) r)},\x);
\draw[domain=6.87:8.7,thick,samples=100] plot ({-(1)*(\x)*cot((\x) r)},\x);
\draw[domain=-5.75:-3.47,thick,samples=100] plot ({-(1)*(\x)*cot((\x) r)},\x);
\draw[domain=-8.7:-6.87,thick,samples=100] plot ({-(1)*(\x)*cot((\x) r)},\x);
\end{tikzpicture}
\end{minipage}
\caption{Singularities of $\psi$: $S_{\psi} \cap {\mathbb R}^2$ (left) and $S_{\psi} \cap \bigl({\mathbb R}\times i{\mathbb R}\bigr)$ (right)}\label{SingularSet}
\end{figure}

The relevance of the function $\psi$ for us is clarified by the following lemma. The proof is straightforward and will be omitted.

\begin{lemma}\label{RiccatiSolution}
Consider the initial value problem for the Riccati ordinary differential equation
\begin{equation}\label{RiccatiConstant}
\left\{
\begin{gathered}
\dot{w} = aw^2 + bw + c \\
w\big|_{t=0} = 0
\end{gathered}
\right.
\end{equation}
with constant coefficients $a,b,c \in {\mathbb R}$. Let $\Delta = b^2 - 4ac$, and define $\alpha = -\frac{b}{2} \in {\mathbb R}$, $\beta = \frac{\sqrt{\Delta}}{2} \in {\mathbb C}$. Note that $\beta \in {\mathbb R}$ if $\Delta \geq 0$, and in case $\Delta < 0$ we choose $\sqrt{\Delta}$ to be the root with positive imaginary part\footnote{We could choose either, really, since $\psi(u,v)$ is even in $v$.}, so $\beta \in i{\mathbb R}_+$.

The maximal solution to \eqref{RiccatiConstant} is given by
$$
w(t) \equiv w(t;a,b,c) = ct\psi(t\alpha,t\beta), \quad t_{\min} < t < t_{\max},
$$
where
\begin{align*}
t_{\min} &= \sup\{t < 0;\; (t\alpha,t\beta) \in S_{\psi}\} \in {\mathbb R}_- \cup \{-\infty\}, \\
t_{\max} &= \inf\{t > 0;\; (t\alpha,t\beta) \in S_{\psi}\} \in {\mathbb R}_+ \cup \{\infty\}.
\end{align*}
\end{lemma}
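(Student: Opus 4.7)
The plan is to solve \eqref{RiccatiConstant} in closed form via the standard Riccati linearization, recognize the result as $ct\psi(t\alpha,t\beta)$, and then handle any degenerate parameter values by analytic continuation. The main computation occurs in the generic case ($a \neq 0$, $\Delta > 0$); all other cases follow because both sides of the identity depend jointly analytically on $(a,b,c)$ on the open set where they are defined.

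First, assume $a \neq 0$. The substitution $w = -u'/(au)$ converts \eqref{RiccatiConstant} into the constant-coefficient linear equation $u'' - bu' + ac\,u = 0$ with initial data normalized by $u'(0) = 0$. Its characteristic polynomial $\lambda^2 - b\lambda + ac$ has roots $-\alpha \pm \beta$, so (with a convenient overall scalar) the relevant solution is
\[
u(t) = -2 e^{-\alpha t}\bigl(\alpha \sinh(\beta t) + \beta \cosh(\beta t)\bigr),
\]
from which a short differentiation gives $u'(t) = 2(\alpha^2 - \beta^2) e^{-\alpha t}\sinh(\beta t)$. Using the identity $\alpha^2 - \beta^2 = ac$ and dividing yields
\[
w(t) = \frac{c\,\sinh(\beta t)}{\alpha \sinh(\beta t) + \beta \cosh(\beta t)},
\]
and multiplying numerator and denominator by $t$ rewrites this as $ct\,\psi(t\alpha,t\beta)$ by \eqref{psiuv}. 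When $\Delta < 0$ we have $\beta \in i\R_+$; because $\psi$ is even in its second argument (as visible from \eqref{psiexpansion}), the right-hand side is still real valued---$\sinh$ and $\cosh$ of imaginary argument produce the standard trigonometric formulas---and the derivation goes through formally.

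The degenerate parameter values ($a = 0$, or $\Delta = 0$ so that $\beta = 0$) can be handled either by a short direct calculation---when $a = 0$ the equation is linear, integrating to $\tfrac{c}{b}(e^{bt}-1)$ if $b \neq 0$ and to $ct$ otherwise, and matched against $ct\psi(t\alpha,t\beta)$ using \eqref{psiline} at the removable singularity when $b = 0$---or, more elegantly, by observing that both the Riccati flow and the formula $ct\psi(t\alpha,t\beta)$ depend jointly analytically on $(a,b,c)$ in a neighborhood of these values, so the identity extends by continuity from the generic stratum.

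Finally, the description of the maximal interval is immediate: both sides of the established identity are real analytic in $t$ exactly on the connected component of $0$ in $\{t \in \R : (t\alpha,t\beta) \notin S_{\psi}\}$, and the right-hand side blows up at the endpoints of that component, which by maximality must coincide with $t_{\min}$ and $t_{\max}$. I expect the main irritant along the way to be keeping the $\Delta < 0$ case tidy, which is purely cosmetic once one exploits the evenness of $\psi$ in its second variable to avoid explicit manipulation of imaginary arguments.
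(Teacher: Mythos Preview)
Your proposal is correct. The paper does not give a proof of this lemma at all; it simply states that ``the proof is straightforward and will be omitted,'' so there is nothing to compare against beyond confirming that your argument works---which it does: the linearization $w=-u'/(au)$ yields $u''-bu'+acu=0$, your chosen $u$ satisfies $u'(0)=0$, the identity $\alpha^2-\beta^2=ac$ gives exactly $w(t)=c\sinh(\beta t)/(\alpha\sinh(\beta t)+\beta\cosh(\beta t))=ct\,\psi(t\alpha,t\beta)$, and the degenerate cases and the maximal-interval claim are handled as you say.
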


It is important to note that the solution $w(t)$ to \eqref{RiccatiConstant} can blow up in finite time depending on the values of the constants $a,b,c$. This has a serious impact on the method presented here in that additional integrity checks on the step size must be performed (a priori or at run time) that do not appear in Runge-Kutta methods or exponential integrators. The way the solution $w(t)$ is represented in Lemma~\ref{RiccatiSolution} utilizing the function $\psi$ facilitates a simple visualization of the existence interval. As described in the lemma, the coefficients $a,b,c \in {\mathbb R}$ determine a point $(\alpha,\beta) \in {\mathbb R}^2$, or $(\alpha,\beta) \in {\mathbb R}\times i{\mathbb R}$, respectively, and the solution involves evaluation of the function $\psi$ restricted to the line passing through the origin in ${\mathbb R}^2$ (or ${\mathbb R}\times i{\mathbb R}$) and that point, parametrized by $t$. $t_{\max}$ is precisely the first positive $t$-value when that line crosses the singular set $S_{\psi}$ of the function $\psi$ (and $t_{\max} = \infty$ if there is no such crossing point), and similarly for $t_{\min}$. It is easy to visualize this behavior in Figure~\ref{SingularSet}. We summarize, focussing on $t_{\max}$:
\begin{itemize}
\item $\Delta \geq 0$, so $(\alpha,\beta) \in {\mathbb R}^2$: $t_{\max} < \infty$ precisely when $\alpha < 0$ and $|\beta| < |\alpha|$. These conditions are equivalent to $b > 0$ and $0 < ac \leq \frac{b^2}{4}$. By definition of $S_{\psi}$, $t_{\max}$ is then the (unique) solution to
$$
\alpha t_{\max} = -\beta t_{\max}\coth(\beta t_{\max})
$$
(recall that $v\coth(v) = 1$ when $v = 0$). Consequently,
$$
t_{\max} =
\begin{cases}
-\frac{1}{\alpha} = \frac{2}{b}  & \textup{if } \Delta = 0, \\
\frac{1}{\beta}\arcoth(-\frac{\alpha}{\beta}) =  \frac{1}{\sqrt{\Delta}}\ln\bigl(\frac{b+\sqrt{\Delta}}{b-\sqrt{\Delta}}\bigr) & \textup{if } \Delta > 0.
\end{cases}
$$
\item If $\Delta < 0$ we have $t_{\max} < \infty$, and
$$
\alpha t_{\max} = -\beta t_{\max}\coth(\beta t_{\max}) = -(-i\beta)t_{\max}\cot(-i\beta t_{\max}).
$$
We get\footnote{In the formula for $t_{\max}$ and elsewhere we use the real $\arccot : {\mathbb R} \to (0,\pi)$.}
$$
t_{\max} = \frac{1}{(-i\beta)}\arccot\Bigl(-\frac{\alpha}{(-i\beta)}\Bigr) = \frac{2}{\sqrt{-\Delta}}\arccot\Bigl(\frac{b}{\sqrt{-\Delta}}\Bigr).
$$
\end{itemize}
In summary,
$$
t_{\max} =
\begin{cases}
\frac{2}{b}  & \textup{if } \Delta = 0,\; b > 0, \\
\frac{1}{\sqrt{\Delta}}\ln\bigl(\frac{b+\sqrt{\Delta}}{b-\sqrt{\Delta}}\bigr) & \textup{if } \Delta > 0, \; \sqrt{\Delta} < b, \\
\frac{2}{\sqrt{-\Delta}}\arccot\Bigl(\frac{b}{\sqrt{-\Delta}}\Bigr) & \textup{if } \Delta < 0, \\
\infty & \textup{otherwise}.
\end{cases}
$$
In either case, since the closest point of $S_{\psi} \cap {\mathbb R}^2$, or $S_{\psi} \cap \bigl({\mathbb R} \times i{\mathbb R}\bigr)$, respectively, to the origin with respect to the Euclidean distance $|(\cdot,\cdot)|$ is the point $(-1,0)$, we note that the solution is guaranteed to exist while $|(t \alpha, t \beta)| < 1$. This gives a rough estimate
\begin{equation}\label{Aprioritmax}
t_{\max} \geq \frac{1}{|(\alpha,\beta)|} = \frac{2}{\sqrt{b^2 + |\Delta|}},
\end{equation}
where we understand the right-hand side of \eqref{Aprioritmax} to be $\infty$ if both $b=\Delta=0$. This estimate can be used to derive an a priori estimate on valid step sizes of the method, as described below.

\subsection*{Description of the method}

The goal is to approximate the solution $y = y(t)$ to the initial value problem
$$
\left\{
\begin{aligned}
\dot{y} &= f(y) \\
y\big|_{t=0} &= y_0
\end{aligned}
\right.
$$
for $(t,y) \in [0,T] \times [A,B]$. We assume that $f$ is $C^3$ in an open neighborhood of $[A,B]$, and $y_0 \in [A,B]$. To fix notation, define functions $a,b,c,\Delta,h_{\max} : [A,B] \to {\mathbb R} \cup \{\infty\}$ via
\begin{gather*}
a(y) = \frac{f''(y)}{2}, \quad b(y) = f'(y), \quad c(y) = f(y), \quad \Delta = b^2 - 4ac, \\
h_{\max}(y) =
\begin{cases}
\frac{2}{b(y)}  & \textup{if } \Delta(y) = 0,\; b(y) > 0, \\
\frac{1}{\sqrt{\Delta(y)}}\ln\Bigl(\frac{b(y)+\sqrt{\Delta(y)}}{b(y)-\sqrt{\Delta(y)}}\Bigr) & \textup{if } \Delta(y) > 0, \; \sqrt{\Delta(y)} < b(y), \\
\frac{2}{\sqrt{-\Delta(y)}}\arccot\Bigl(\frac{b(y)}{\sqrt{-\Delta(y)}}\Bigr) & \textup{if } \Delta(y) < 0, \\
\infty & \textup{otherwise}.
\end{cases}
\end{gather*}
Choose a tolerance $0 < \tol \ll 1$. Quantities that in absolute value are less than $\tol$ are considered numerically zero. Evaluation of approximate $\frac{0}{0}$ expressions with denominators of magnitude $< \tol$, such as occur in the evaluation of $\psi(u,v)$ given by \eqref{psiuv} for $v$ near zero, should be avoided to improve stability. For this reason, we define
\begin{equation}\label{MethodPhiDef}
\Phi(h,y) =
\begin{cases}
y + \frac{2c(y)\sinh\bigl[\frac{\sqrt{\Delta(y)}}{2}h\bigr]}{\sqrt{\Delta(y)}\cosh\bigl[\frac{\sqrt{\Delta(y)}}{2}h\bigr] - b(y)\sinh\bigl[\frac{\sqrt{\Delta(y)}}{2}h\bigr]} & \textup{for } (h,y) \in U_+, \\
y + \frac{2c(y)\sin\bigl[\frac{\sqrt{-\Delta(y)}}{2}h\bigr]}{\sqrt{-\Delta(y)}\cos\bigl[\frac{\sqrt{-\Delta(y)}}{2}h\bigr] - b(y)\sin\bigl[\frac{\sqrt{-\Delta(y)}}{2}h\bigr]} & \textup{for } (h,y) \in U_-, \\
y + \frac{2c(y)h}{2-b(y)h} - \frac{h^3c(y)\Delta(y)}{3(2-b(y)h)^2} & \textup{for } (h,y) \in U_0,
\end{cases}
\end{equation}
where
\begin{align*}
U_+ &= \{(h,y) \in [0,\infty)\times[A,B];\; \Delta(y) \geq 4\,\tol,\; h < h_{\max}(y),\; 2-hb(y) \geq \sqrt{\tol}\}, \\
U_- &= \{(h,y) \in [0,\infty)\times[A,B];\; \Delta(y) \leq -4\,\tol,\; h < h_{\max}(y),\; 2-hb(y) \geq \sqrt{\tol}\}, \\
U_0 &= \{(h,y) \in [0,\infty)\times[A,B];\; |\Delta(y)| < 4\,\tol,\; 2-hb(y) \geq \sqrt{\tol}\}.
\end{align*}
Some comments are in order:
\begin{itemize}
\item By Lemma~\ref{RiccatiSolution} and formula \eqref{psiuv}, the first two cases in \eqref{MethodPhiDef} are the exact formulas of the general method \eqref{TaylorMethod} discussed in Section~\ref{SectionTaylorMethod} with $r = 2$. In the second case we merely converted to trigonometric functions in the formulas since the argument of the hyperbolic trigonometric functions would be imaginary.
\item Taylor expansion of the hyperbolic trigonometric functions in the denominator in the first case gives
\begin{align*}
&\sqrt{\Delta(y)}\cosh\bigl[\tfrac{\sqrt{\Delta(y)}}{2}h\bigr] - b(y)\sinh\bigl[\tfrac{\sqrt{\Delta(y)}}{2}h\bigr] \\
=
&\sqrt{\Delta(y)}\sum\limits_{k=0}^{\infty}\tfrac{\bigl[\frac{\sqrt{\Delta(y)}}{2}h\bigr]^{2k}}{(2k)!} - b(y)\sum\limits_{k=0}^{\infty}\tfrac{\bigl[\frac{\sqrt{\Delta(y)}}{2}h\bigr]^{2k+1}}{(2k+1)!} \\
=
&\tfrac{\sqrt{\Delta(y)}}{2}\sum\limits_{k=0}^{\infty}\tfrac{1}{(2k)!}\bigl[2 - \tfrac{hb(y)}{2k+1}\bigr] \bigl[\tfrac{\sqrt{\Delta(y)}}{2}h\bigr]^{2k} \\
\geq &\,\tol \cdot \cosh\bigl[\tfrac{\sqrt{\Delta(y)}}{2}h\bigr] \geq \tol
\end{align*}
under the assumption that both $\frac{\sqrt{\Delta(y)}}{2} \geq \sqrt{\tol}$ and $2 - hb(y) \geq \sqrt{\tol}$, which explains the definition of $U_+$.
\item In the second case, we note that when $b(y) > 0$ we have $h_{\max}(y) < \frac{2}{b(y)}$ directly from the definition when $\Delta(y) < 0$, and consequently $2 - hb(y) > 0$ is implied by $h < h_{\max}(y)$ (the inequality is trivially fulfilled for $b(y) \leq 0$). The condition $2 - hb(y) \geq \sqrt{\tol}$ gives an extra buffer. We also note, arguing analogous to the first case, that
\begin{align*}
&\sqrt{-\Delta(y)}\cos\bigl[\tfrac{\sqrt{-\Delta(y)}}{2}h\bigr] - b(y)\sin\bigl[\tfrac{\sqrt{-\Delta(y)}}{2}h\bigr] \\
=
&\tfrac{\sqrt{-\Delta(y)}}{2}\sum\limits_{k=0}^{\infty}\tfrac{(-1)^k}{(2k)!}\bigl[2 - \tfrac{hb(y)}{2k+1}\bigr] \bigl[\tfrac{\sqrt{-\Delta(y)}}{2}h\bigr]^{2k} \\
&= \tfrac{\sqrt{-\Delta(y)}}{2}\Bigl[2 - hb(y) + O\Bigl[\bigl[\tfrac{\sqrt{-\Delta(y)}}{2}h\bigr]^2\Bigr] \Bigr],
\end{align*}
and thus the denominator is asymptotically $\geq \tol$ under the restrictions placed on $U_-$.
\item If the discriminant term $\Delta(y)$ is too small, evaluation of $\psi(u,v)$ as given by \eqref{psiuv} is unstable, so we opt to use the expansion \eqref{psiexpansion} instead for such terms, leading to the definition of $\Phi(h,y)$ in the third case. By Lemma~\ref{RiccatiSolution} and expansion \eqref{psiexpansion}, we note that the theoretical method as determined by \eqref{TaylorMethod} and our definition for $\Phi(h,y)$ in the third case of \eqref{MethodPhiDef} coincide to third order in $h$ as $h \to 0$, showing that the local truncation error in our definition is still $O(h^4)$ as required. Moreover, our definition for $\Phi(h,y)$ in the third case matches the general method from \eqref{TaylorMethod} if $\Delta(y) = 0$.
\end{itemize}

\subsubsection*{The algorithm}

Besides the differential equation $\dot{y} = f(y)$ and the initial value $y_0$, the inputs are $0 < \tol \ll 1$, the window $[0,T]\times[A,B]$ where the solution $y(t)$ is supposed to be approximated, and the chosen step size $h > 0$ for constructing an approximating sequence $y_0,y_1,\ldots$ of values for the solution at equidistant points $t = jh$, $j=0,1,\ldots$

\begin{enumerate}[(1)]
\item Check whether $y_0 \in [A,B]$. If not, the algorithm terminates with an error message that the initial value lies outside of the chosen tracking window.
\end{enumerate}

\noindent
Now suppose that an approximating partial sequence $y_0,\ldots,y_n$ for some $n \in {\mathbb N}_0$ has already been successfully constructed.

\begin{enumerate}[(1)]
\setcounter{enumi}{1}
\item If $(n+1)h > T$, the algorithm terminates with success and displays the approximation $y_0,\ldots,y_n$ of the solution.
\item \emph{Integrity check on the step size}\/: Check whether $(h,y_n) \in U_+ \cup U_- \cup U_0$. If not, the program terminates with the message that the algorithm stops after $n$ steps, approximating the solution on $[0,nh]$, as the method becomes undefined in the next step due to the chosen step size. The approximation of the solution thus far is displayed, and it is suggested to run the program again with a smaller step size $h > 0$.
\item Check whether $\Phi(h,y_n) \in [A,B]$. If not, the program is terminated with the message that the algorithm stops after $n$ steps, approximating the solution on $[0,nh]$, as the approximate solution is leaving the designated tracking window in the next step. The approximation of the solution thus far is displayed.
\item If the program reaches this step, it accepts $y_{n+1} = \Phi(h,y_n)$ as the next value of the approximating sequence, and recursively resumes at step (2) with $n$ incremented by one.
\end{enumerate}

Instead of performing the integrity check on the step size in (3) at run time during every execution of the recursive loop, an a priori estimate can be obtained prior to building the approximating sequence to determine a value $h_0 > 0$ that only depends on $f$, $\tol$, and the chosen viewing window such that all step sizes $0 < h < h_0$ work. Following this procedure and skipping the integrity checks at run time increases the speed of the program. The a priori estimate utilizes \eqref{Aprioritmax}, as follows:

\begin{enumerate}[(i)]
\item Find the maximum value $b_{\max}$ of $b : [A,B] \to {\mathbb R}$.
\item Find the maximum value $s_{\max}$ of $s:= b^2 + |\Delta| : [A,B] \to {\mathbb R}$.
\item Set
$$
h_0 =
\begin{cases}
\min\bigl\{\frac{2}{\sqrt{s_{\max}}},\frac{2-\sqrt{\tol}}{b_{\max}},T\bigr\} &\textup{if } s_{\max} > \tol,\; b_{\max} > \tol, \\
\min\bigl\{\frac{2}{\sqrt{s_{\max}}},T\bigr\} &\textup{if } s_{\max} > \tol,\; b_{\max} \leq \tol, \\
T &\textup{otherwise.}
\end{cases}
$$
\end{enumerate}


\section{Numerical tests of the quadratic Taylor method}\label{TestsQuadraticTaylor}

\noindent
In all tests described below we used the tolerance $\tol = \num{1E-14}$ and recorded the global error of the method on the indicated interval for the problem with various step sizes $h$. Errors in magnitude less than $\tol$ have been recorded as zero. All tests were performed using MATLAB. We are benchmarking our third order method, labeled QT3 below, against the following standard methods from the Runge-Kutta family:

\begin{itemize}

\item Kutta third order method (K3), see \cite[Section 233]{Butcher}: The Butcher tableau for this method is

\begin{center}
\begin{tabular}{c|ccc}
$0$ & \\[1.2ex]
$\frac{1}{2}$ & $\frac{1}{2}$ \\[1.2ex]
$1$ & $-1$ & $2$ \\[1.2ex] \hline
& $\frac{1}{6}$ & $\frac{2}{3}$ & $\frac{1}{6}$ 
\end{tabular}
\end{center}

\item Bogacki-Shampine third order method (BS3), see \cite{BogackiShampine}: The Butcher tableau for this method is

\begin{center}
\begin{tabular}{c|cccc}
$0$ & \\[1.2ex]
$\frac{1}{2}$ & $\frac{1}{2}$ \\[1.2ex]
$\frac{3}{4}$ & $0$ & $\frac{3}{4}$ \\[1.2ex]
$1$ & $\frac{2}{9}$ & $\frac{1}{3}$ & $\frac{4}{9}$ \\[1.2ex] \hline
& $\frac{2}{9}$ & $\frac{1}{3}$ & $\frac{4}{9}$ & $0$
\end{tabular}
\end{center}

\noindent
Embedded in a 3(2) pair, this method is built into one of the standard algorithms, {\ttfamily ode23}, of the MATLAB suite \cite{ShampineReichelt}. In \cite{BogackiShampine} the third order formulas are credited to Ralston \cite{Ralston}.

\item Classical Runge-Kutta fourth order method (RK4), see \cite[Section II.1]{HairerNorsettWanner}: The Butcher tableau for this method is

\begin{center}
\begin{tabular}{c|cccc}
$0$ & \\[1.2ex]
$\frac{1}{2}$ & $\frac{1}{2}$ \\[1.2ex]
$\frac{1}{2}$ & $0$ & $\frac{1}{2}$ \\[1.2ex]
$1$ & $0$ & $0$ & $1$ \\[1.2ex] \hline
& $\frac{1}{6}$ & $\frac{1}{3}$ & $\frac{1}{3}$ & $\frac{1}{6}$
\end{tabular}
\end{center}

\end{itemize}

\subsection*{Logistic equation}

As expected, the quadratic Taylor method outperforms standard Runge-Kutta methods for quadratic ordinary differential equations. Consider
$$
\left\{
\begin{aligned}
\dot{y} &= y(10-y) \\
y\big|_{t=0} &= 0.5 
\end{aligned}
\right.
$$
on the interval $[0,2]$. The exact solution is
$$
y(t) = \frac{10e^{10t}}{19+e^{10t}}, \quad 0 \leq t \leq 2.
$$
\begin{center}
\begin{tabular}{|c|c|c|c|c|}
\hline $h$ &  K3 & BS3 & RK4 & QT3 \\ \hline
$0.1$ & $\num{9.0574E-02}$ & $\num{4.9747E-02}$ & $\num{1.3532E-02}$ & $0$ \\ \hline
$0.05$ & $\num{1.3495E-02}$ & $\num{8.2625E-03}$ & $\num{1.0941E-03}$ & $0$ \\ \hline
$0.02$ & $\num{9.6842E-04}$ & $\num{6.3000E-04}$ & $\num{3.3012E-05}$ & $0$ \\ \hline
$0.01$ & $\num{1.2579E-04}$ & $\num{8.3520E-05}$ & $\num{2.1834E-06}$ & $0$ \\ \hline
\end{tabular}
\end{center}

\subsection*{Bernoulli equation}

Consider
$$
\left\{
\begin{aligned}
\dot{y} &= y\Bigl(1 - \Bigl(\frac{y}{20}\Bigr)^2\Bigr) \\
y\big|_{t=0} &= \num{1E-04}
\end{aligned}
\right.
$$
on the interval $[0,5]$. The exact solution is
$$
y(t) = \frac{20}{\sqrt{(\num{4E10}-1)e^{-2t} + 1}}, \quad 0 \leq t \leq 5.
$$
\begin{center}
\begin{tabular}{|c|c|c|c|c|}
\hline $h$ &  K3 & BS3 & RK4 & QT3 \\ \hline
$0.1$ & $\num{2.8543E-06}$ & $\num{2.8543E-06}$ & $\num{5.6900E-08}$ & $\num{9.6127E-13}$ \\ \hline
$0.05$ & $\num{3.7135E-07}$ & $\num{3.7135E-07}$ & $\num{3.7073E-09}$ & $\num{1.2390E-13}$ \\ \hline
$0.02$ & $\num{2.4343E-08}$ & $\num{2.4343E-08}$ & $\num{9.7307E-11}$ & $0$ \\ \hline
$0.01$ & $\num{3.0673E-09}$ & $\num{3.0673E-09}$ & $\num{6.1326E-12}$ & $0$ \\ \hline
\end{tabular}
\end{center}

\noindent
Let's also consider the same differential equation
$$
\left\{
\begin{aligned}
\dot{y} &= y\Bigl(1 - \Bigl(\frac{y}{20}\Bigr)^2\Bigr) \\
y\big|_{t=0} &= 1
\end{aligned}
\right.
$$
on the same interval $[0,5]$, but with a different initial value that is farther away from the equilibrium solutions. The exact solution is then
$$
y(t) = \frac{20}{\sqrt{399e^{-2t}+1}}, \quad 0 \leq t \leq 5.
$$
\begin{center}
\begin{tabular}{|c|c|c|c|c|}
\hline $h$ &  K3 & BS3 & RK4 & QT3 \\ \hline
$0.1$ & $\num{6.3817E-04}$ & $\num{4.5295E-04}$ & $\num{1.5055e-05}$ & $\num{3.2525E-04}$ \\ \hline
$0.05$ & $\num{8.1554E-05}$ & $\num{5.8683E-05}$ & $\num{9.2633e-07}$ & $\num{4.1018E-05}$ \\ \hline
$0.02$ & $\num{5.2845E-06}$ & $\num{3.8374E-06}$ & $\num{2.3554e-08}$ & $\num{2.6396E-06}$ \\ \hline
$0.01$ & $\num{6.6341E-07}$ & $\num{4.8314E-07}$ & $\num{1.4695e-09}$ & $\num{3.3052E-07}$ \\ \hline
\end{tabular}
\end{center}

\subsection*{Gompertz equation}

Consider
$$
\left\{
\begin{aligned}
\dot{y} &= y\ln\Bigl(\frac{30}{y}\Bigr) \\
y\big|_{t=0} &= 29
\end{aligned}
\right.
$$
on the interval $[0,2]$. The exact solution is
$$
y(t) = 30\Bigl(\frac{29}{30}\Bigr)^{e^{-t}}, \quad 0 \leq t \leq 2.
$$
\begin{center}
\begin{tabular}{|c|c|c|c|c|}
\hline $h$ &  K3 & BS3 & RK4 & QT3 \\ \hline
$0.1$ & $\num{1.5931E-05}$ & $\num{1.5604E-05}$ & $\num{3.1690E-07}$ & $\num{9.7263E-09}$ \\ \hline
$0.05$ & $\num{1.9169E-06}$ & $\num{1.8770E-06}$ & $\num{1.9019E-08}$ & $\num{1.1837E-09}$ \\ \hline
$0.02$ & $\num{1.1990E-07}$ & $\num{1.1734E-07}$ & $\num{4.7509E-10}$ & $\num{7.4419E-11}$ \\ \hline
$0.01$ & $\num{1.4873E-08}$ & $\num{1.4554E-08}$ & $\num{2.9431E-11}$ & $\num{9.2619E-12}$ \\ \hline
\end{tabular}
\end{center}

\subsection*{Flame propagation}

The following example is taken from a \emph{Cleve's Corner} blog post on the MathWorks web page, see \cite{Moler2003}. It is attributed there to L.~Shampine. Consider
$$
\left\{
\begin{aligned}
\dot{y} &= y^2 - y^3 \\
y\big|_{t=0} &= 0.98
\end{aligned}
\right.
$$
on the interval $[0,10]$. The exact solution is
$$
y(t) = \frac{1}{1 + W\bigl(\frac{1}{49}e^{\frac{1}{49}-t}\bigr)}, \quad 0 \leq t \leq 10,
$$
where $W$ is the Lambert W function, see \cite{Corlessetal}.
\begin{center}
\begin{tabular}{|c|c|c|c|c|}
\hline $h$ &  K3 & BS3 & RK4 & QT3 \\ \hline
$0.1$ & $\num{3.0134E-07}$ & $\num{2.8743E-07}$ & $\num{5.9219E-09}$ & $\num{3.8462E-10}$ \\ \hline
$0.05$ & $\num{3.6318E-08}$ & $\num{3.4589E-08}$ & $\num{3.5555E-10}$ & $\num{4.6768E-11}$ \\ \hline
$0.02$ & $\num{2.2745E-09}$ & $\num{2.1638E-09}$ & $\num{8.8861E-12}$ & $\num{2.9453E-12}$ \\ \hline
$0.01$ & $\num{2.8224E-10}$ & $\num{2.6843E-10}$ & $\num{5.5067E-13}$ & $\num{3.6637E-13}$ \\ \hline
\end{tabular}
\end{center}

\subsection*{An equation involving a sine function}

The following initial value problem is qualitatively similar to the logistic equation as well. Consider
$$
\left\{
\begin{aligned}
\dot{y} &= \sin(y) \\
y\big|_{t=0} &= 0.01
\end{aligned}
\right.
$$
on the interval $[0,1]$. The exact solution is
$$
y(t) = 2\arctan\bigl(\tan(0.005)e^t\bigr), \quad 0 \leq t \leq 1.
$$
\begin{center}
\begin{tabular}{|c|c|c|c|c|}
\hline $h$ & K3 & BS3 & RK4 & QT3 \\ \hline
$0.1$ & $\num{1.0453E-06}$ & $\num{1.0450E-06}$ & $\num{2.0837E-08}$ & $\num{3.4029E-10}$ \\ \hline
$0.05$ & $\num{1.3599E-07}$ & $\num{1.3594E-07}$ & $\num{1.3576E-09}$ & $\num{4.3857E-11}$ \\ \hline
$0.02$ & $\num{8.9142E-09}$ & $\num{8.9111E-09}$ & $\num{3.5634E-11}$ & $\num{2.8583E-12}$ \\ \hline
$0.01$ & $\num{1.1232E-09}$ & $\num{1.1228E-09}$ & $\num{2.2457E-12}$ & $\num{3.5945E-13}$ \\ \hline
\end{tabular}
\end{center}

\subsection*{Conclusion}

In the tested cases, the global error of our third order QT3 method is comparable and often smaller by several orders of magnitude than the global error of the other tested methods of the same order from the Runge-Kutta family. We even observed it to be smaller or comparable to the global error of the classical Runge-Kutta method of order four in most cases. This effect is most pronounced near equilibrium solutions of the tested equations.


\begin{appendix}


\section{Convergence of 1-step methods}\label{ConvergenceOneStepMethods}

\noindent
Let $D \subset \R$ be open, and suppose $f : D \to \R$ satisfies a local Lipschitz condition in $D$. Let $y : [0,T] \to D$ be the solution to the initial value problem
$$
\left\{\begin{aligned}
\dot{y}(t) &= f(y(t)) \textup{ on } 0 \leq t \leq T, \\
y\big|_{t=0} &= y_0 \in D.
\end{aligned}
\right.
$$
Theorem~\ref{ConvergenceTheoremGeneral} below is a general convergence result of abstract numerical $1$-step methods for the approximation of the solution $y$ on partitions of the interval $[0,T]$ (see, for example, \cite[Section~10.3]{Kress}). It is the basis for proving Theorem~\ref{TaylorConvergent} in Section~\ref{SectionTaylorMethod}. We restrict our attention to equidistant partitions of step size $h > 0$.

\begin{theorem}\label{ConvergenceTheoremGeneral}
Let $K \Subset D$ be a compact neighborhood with $y([0,T]) \subset \mathring{K}$, and let
$$
\Phi : [0,h_0] \times K \to \R
$$
be continuous, $h_0 > 0$. Assume:
\begin{itemize}
\item \emph{Consistency:} $\Phi(0,y) = y$ for all $y \in K$, and $\frac{\partial\Phi}{\partial h} : (0,h_0) \times K \to \R$ exists and extends to a continuous function on $[0,h_0] \times K$ such that $\frac{\partial\Phi}{\partial h}(0,y) = f(y)$ for all $y \in K$.
\item \emph{Lipschitz Condition:} The function $\frac{\partial \Phi}{\partial h} : [0,h_0] \times K \to \R$ satisfies a Lipschitz condition with respect to $y$, i.e., there exists a constant $L > 0$ such that
$$
\Bigl| \frac{\partial\Phi}{\partial h}(h,y) - \frac{\partial\Phi}{\partial h}(h,y') \Bigr| \leq L|y-y'|
$$
for all $0 \leq h \leq h_0$ and $y,y' \in K$. 
\item \emph{Local Truncation Error:} There exists $p \geq 1$ and a constant $C \geq 0$ independent of $0 \leq h \leq h_0$ and $0 \leq t \leq T$ such that
$$
\bigl|y(t+h) - \Phi(h,y(t))\bigr| \leq C h^{p+1}
$$
whenever $0 \leq t+h \leq T$.
\end{itemize}
Then $\Phi$ yields a $1$-step method of order $p$ for the approximation of $y$ on $[0,T]$, i.e., there exist $N_0 \in \N$ and a constant $M \geq 0$ such that for all $N \geq N_0$, $h=\frac{T}{N}$, the following holds:

The sequence of numbers $y_0^{(N)},\ldots,y_{N}^{(N)}$ defined via
$$
\left\{
\begin{aligned}
y_0^{(N)} &= y_0 \\
y_{n+1}^{(N)} &= \Phi\bigl(h,y_n^{(N)}\bigr), \quad n=0,\ldots,N-1,
\end{aligned}
\right.
$$
is well-defined, all $y_n^{(N)} \in \mathring{K}$, and the \emph{Global Error} satisfies
\begin{equation}\label{GlobalErrorOrderp}
\max\limits_{n=0}^N\bigl| y(nh) - y_n^{(N)}\bigr| \leq M h^{p}.
\end{equation}
A valid choice for the constant in \eqref{GlobalErrorOrderp} is $M = \frac{C}{L}\bigl(e^{LT}-1\bigr)$.
\end{theorem}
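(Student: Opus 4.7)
The strategy is the standard one for abstract 1-step methods: compare each numerical iterate $y_n^{(N)}$ to the exact sample $y(nh)$, derive a one-step error recursion, and solve it. First I would establish a Lipschitz-type bound on the increment map $y \mapsto \Phi(h,y)$ itself. Using consistency $\Phi(0,y) = y$ together with the fundamental theorem of calculus, write
\begin{equation*}
\Phi(h,y) - \Phi(h,y') = (y - y') + \int_0^h \bigl[\partial_h\Phi(s,y) - \partial_h\Phi(s,y')\bigr]\, ds,
\end{equation*}
and invoke the hypothesized Lipschitz bound on $\partial_h\Phi$ in $y$ to conclude $|\Phi(h,y) - \Phi(h,y')| \leq (1+Lh)|y-y'|$ for all $y,y' \in K$ and $0 \leq h \leq h_0$.

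Next, set $e_n = y(nh) - y_n^{(N)}$ and split
\begin{equation*}
e_{n+1} = \bigl[y((n+1)h) - \Phi(h, y(nh))\bigr] + \bigl[\Phi(h, y(nh)) - \Phi(h, y_n^{(N)})\bigr].
\end{equation*}
The first bracket is the local truncation error, bounded by $Ch^{p+1}$, while the second is bounded by $(1+Lh)|e_n|$ by the preliminary step. This yields the linear recursion $|e_{n+1}| \leq (1+Lh)|e_n| + Ch^{p+1}$; starting from $e_0 = 0$ and summing the resulting geometric progression gives
\begin{equation*}
|e_n| \leq Ch^{p+1}\sum_{k=0}^{n-1}(1+Lh)^k = \frac{Ch^p}{L}\bigl[(1+Lh)^n - 1\bigr] \leq \frac{C}{L}\bigl(e^{LT}-1\bigr)h^p,
\end{equation*}
using $(1+Lh)^n \leq e^{Lnh} \leq e^{LT}$ whenever $nh \leq T$. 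This is exactly the claimed global error bound with $M = \frac{C}{L}(e^{LT}-1)$.

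The main obstacle, which turns this from a one-line computation into a genuine induction, is that the recursion above tacitly assumes $y_n^{(N)} \in K$ at every step, so that $\Phi(h,y_n^{(N)})$ is even defined and the Lipschitz estimate is applicable. To close this loop I would fix $\delta > 0$ such that the closed $\delta$-neighborhood of the compact set $y([0,T])$ is still contained in $\mathring{K}$, and then choose $N_0 \in \N$ large enough that $h := T/N$ satisfies both $h \leq h_0$ and $Mh^p < \delta$ for all $N \geq N_0$. A simultaneous induction on $n$ then establishes, in one sweep, both that $y_n^{(N)}$ lies within distance $Mh^p < \delta$ of $y(nh)$, hence in $\mathring{K}$ where $\Phi(h,\cdot)$ may legitimately be applied at the next step, and that $|e_n| \leq Mh^p$; the base case $n=0$ is immediate from $y_0^{(N)} = y_0 = y(0)$.
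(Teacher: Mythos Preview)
The paper does not actually prove this theorem: it is stated in Appendix~\ref{ConvergenceOneStepMethods} as a known result with a reference to \cite[Section~10.3]{Kress}, and no proof is given. Your argument is the standard textbook proof of this convergence result for abstract $1$-step methods --- the Lipschitz bound $|\Phi(h,y)-\Phi(h,y')|\leq(1+Lh)|y-y'|$ obtained from the fundamental theorem of calculus, the splitting of $e_{n+1}$ into the local truncation term plus the propagated error, the discrete Gronwall step, and the simultaneous induction ensuring the iterates remain in $\mathring{K}$ --- and it is correct in all details, including the identification of the constant $M=\tfrac{C}{L}(e^{LT}-1)$.
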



\section{Differential equations depending on parameters}\label{ODEsDependingOnParameters}

\noindent
Let $\Lambda \subset \R^q$ be open, and $V \subset \R$ be an open interval with $0 \in V$. Suppose $F(w;\lambda)$ is continuously differentiable with respect to the variables $(w;\lambda) \in V \times \Lambda$. Consider the family of ordinary differential equations
\begin{equation}\label{ODEwithparameter}
\left\{
\begin{aligned}
\frac{\partial w}{\partial t}(t;\lambda) &= F(w(t;\lambda);\lambda) \\
w(0;\lambda) &= 0
\end{aligned}
\right.
\end{equation}
for the unknown function $t \mapsto w(t;\lambda)$ depending on the parameter $\lambda \in \Lambda$. The following holds (see \cite{Walter}).

\begin{theorem}\label{ODEParameters}
For each $\lambda \in \Lambda$ there exists a unique maximally extended solution
$$
w(\cdot\,;\lambda) : (t_{\min}(\lambda),t_{\max}(\lambda)) \to V
$$
to \eqref{ODEwithparameter}, where $-\infty \leq t_{\min}(\lambda) < 0 < t_{\max}(\lambda) \leq \infty$.

The functions $t_{\max}, t_{\min} : \Lambda \to \R\cup\{\pm\infty\}$ are lower and upper semicontinuous, respectively, and the set
$$
U_{\max} = \{(t,\lambda);\; \lambda \in \Lambda, \; t_{\min}(\lambda) < t < t_{\max}(\lambda)\} \subset \R\times\R^q
$$
is open. The solution $w$ to \eqref{ODEwithparameter} defines a map $U_{\max} \to V$, and both $w$ and $\frac{\partial w}{\partial t}$ are continuously differentiable in $U_{\max}$. The partial derivatives of $w$ satisfy
\begin{equation}\label{PartialDerivativeFormulas}
\begin{aligned}
\frac{\partial w}{\partial t}(t;\lambda) &= F(w(t;\lambda);\lambda) \quad \textup{(this is just \eqref{ODEwithparameter})}, \\
(\nabla_{\lambda} w)(t;\lambda) &= \int_0^t e^{\int_s^t F_w(w(u;\lambda);\lambda)du}(\nabla_{\lambda}F)(w(s;\lambda);\lambda)\,ds.
\end{aligned}
\end{equation}
In particular, if $F$ is more than once continuously differentiable, then so is $w$, and formulas for higher partial derivatives of $w$ follow from \eqref{PartialDerivativeFormulas} with the Chain Rule.
\end{theorem}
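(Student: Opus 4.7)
The plan is to assemble three classical steps from the standard ODE-with-parameter toolkit. First, for each fixed $\lambda \in \Lambda$, the $C^1$ hypothesis on $F$ makes $F(\cdot\,;\lambda)$ locally Lipschitz on $V$, so Picard--Lindel\"of supplies a unique maximally extended solution $w(\cdot\,;\lambda)$ on an interval $(t_{\min}(\lambda),t_{\max}(\lambda))$ containing $0$.

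Second, to establish openness of $U_{\max}$ together with the lower (resp.\ upper) semicontinuity of $t_{\max}$ (resp.\ $t_{\min}$), I would fix $(t_0,\lambda_0) \in U_{\max}$, assume without loss of generality that $t_0 \geq 0$, and enclose the compact trajectory $\{w(s;\lambda_0) : 0 \leq s \leq t_0\}$ in a slightly larger compact interval $[a,b] \Subset V$. Using the uniform $C^1$-bounds of $F$ on $[a,b]\times B$ for a small ball $B \Subset \Lambda$ around $\lambda_0$, a Gr\"onwall estimate applied to $|w(t;\lambda)-w(t;\lambda_0)|$ shows that for $\lambda$ near $\lambda_0$ the solution stays in $[a,b]$ on all of $[0,t_0]$ and depends continuously on $\lambda$. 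In particular $t_{\max}(\lambda) > t_0$ for all such $\lambda$, giving lower semicontinuity of $t_{\max}$ and openness of $U_{\max}$ simultaneously; the case $t_0 \leq 0$ is analogous and produces the upper semicontinuity of $t_{\min}$.

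Third, for $C^1$-dependence of $w$ on $\lambda$, I would introduce the variational equation obtained by formally differentiating \eqref{ODEwithparameter} in $\lambda$,
\begin{equation*}
\partial_t v(t;\lambda) = F_w(w(t;\lambda);\lambda)\,v(t;\lambda) + (\nabla_{\lambda}F)(w(t;\lambda);\lambda), \quad v(0;\lambda) = 0,
\end{equation*}
whose coefficients are continuous on $U_{\max}$. Because this equation is linear in $v$, variation of constants produces the explicit global solution that appears in the stated formula \eqref{PartialDerivativeFormulas} for $\nabla_{\lambda}w$. The delicate step is verifying that this candidate $v$ actually equals $\nabla_{\lambda} w$: for that I would estimate the difference quotient $s^{-1}\bigl[w(t;\lambda+se_j) - w(t;\lambda)\bigr] - v_j(t;\lambda)$ by subtracting the integrated ODEs, Taylor-expanding $F$ to first order in its first argument, and applying Gr\"onwall one more time as $s \to 0$. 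Continuity of $\nabla_{\lambda}w$ on $U_{\max}$ then follows immediately from the integral representation.

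Finally, higher smoothness of $w$ when $F \in C^k$ follows by induction: differentiating \eqref{PartialDerivativeFormulas} and applying the chain rule expresses each higher partial derivative as the solution of a linear ODE whose coefficients are one degree less smooth than $F$, so the preceding arguments can be iterated. The main obstacle throughout is the variational-equation step and its difference-quotient verification; the rest of the proof is a relatively routine chain of Gr\"onwall estimates.
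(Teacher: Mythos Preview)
Your outline is a correct and standard assembly of the classical ingredients (Picard--Lindel\"of, Gr\"onwall-based continuous dependence, and the variational equation solved by variation of constants), and it would yield the statement as claimed. However, there is nothing to compare it against: the paper does not prove this theorem at all. It is stated in the appendix as a known result from the literature, with only the reference \cite{Walter} given in lieu of a proof. So your proposal goes well beyond what the paper itself supplies.
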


\begin{remark}\label{RemarkODEParameters}
The upper and lower semicontinuity of the endpoint functions of the maximal existence interval follow from the openness of $U_{\max}$. Semicontinuity implies that $t_{\min}$ attains its maximum value $t_{\min}(K) \in \R\cup\{-\infty\}$ and $t_{\max}$ attains its minimum value $t_{\max}(K) \in \R\cup\{\infty\}$ on every compact subset $K \Subset \Lambda$. In particular, $w(t;\lambda)$ is defined (and differentiable) for all $(t;\lambda) \in (t_{\min}(K),t_{\max}(K))\times K$. Thus, for every compact subset $K \Subset \Lambda$, we are guaranteed that $w(t;\lambda)$ exists on $[0,T]\times K$ for some $T > 0$ (depending on $K$). We make use of this in the theoretical Section~\ref{SectionTaylorMethod} of this paper.
\end{remark}


\section{MATLAB source code}\label{SourceCode}

\subsection*{Main program of the quadratic Taylor method}

{\ }

{\tiny
\begin{verbatim}
% Instructions:
%
% 1) Users have to choose a zero tolerance. Any values that are in magnitude
%    less than that value are numerically zero.
% 2) Users have to specify the objective ODE and the initial condition.
% 3) Users have to specify the stepsize. The program currently supports only
%    equidistant time-stepping.
% 4) Users have to specify a viewing window [0,T] in time and [ymin,ymax] for
%    the observed range of values of the solution.
% 5) Users have to assign whether to carry out integrity checks on the stepsize
%    for the method at run-time at each step, or determine a valid stepsize
%    prior to running the program. This is the purpose of boolean apriori:
%    apriori=true means no integrity checks at runtime.
%    apriori=false means the program will check the integrity of the stepsize
%    at runtime at each step
% 6) The program calls the function Arccot, provided separately. Arccot is the
%    real inverse cotangent function with range (0,pi).

%
% User specifications:
%

% Set zero tolerance
zero=1e-14;
% Use symbolic engine for y
syms y 
% Define objective ODE
dydt=exp(y);
% Set initial value
y0=2;
% Set stepsize
h=0.01;
% Set viewing window
ymin=0;
ymax=2.01;
T=2; %time interval [0,T]
% Assign true or false to apriori
apriori=false;


% Initialize the row matrix t. t contains the time steps.
t=0:h:T; 
% Initialize the row matrix yNumerical. yNumerical will later contain the
% numerical approximations for the solution on the time grid.
yNumerical=zeros(1,length(t));
% Store the initial value into the matrix yNumerical
yNumerical(1)=y0;  
for i=1:length(t)-1 
    a=double(subs(diff(dydt,2),y,yNumerical(i))/2);  
    b=double(subs(diff(dydt,1),y,yNumerical(i)));
    c=double(subs(dydt,y,yNumerical(i)));
    delta=b^2-4*a*c;
    stabilityCheck=2-h*b;
    if apriori==true
       if delta>=4*zero
          w=2*c*sinh(sqrt(delta)*h/2)/(sqrt(delta)*cosh(sqrt(delta)*h/2)-b*sinh(sqrt(delta)*h/2));
       elseif delta<=-4*zero
          w=2*c*sin(sqrt(-delta)*h/2)/(sqrt(-delta)*cos(sqrt(-delta)*h/2)-b*sin(sqrt(-delta)*h/2));
       elseif abs(delta)<4*zero
          w=2*c*h/(2-b*h)-h^3*c*delta/(3*(2-b*h)^2);
       end  
    elseif apriori==false
           if stabilityCheck<sqrt(zero)
              warning('Method requires a smaller stepsize in order to be stable.')
              t=t(1,1:i);
              yNumerical=yNumerical(1,1:i);
              break
           else
              if delta>=4*zero
                 if sqrt(delta)<b
                    hmax=(1/sqrt(delta))*log((b+sqrt(delta))/(b-sqrt(delta)));
                 else
                    hmax=inf;
                 end
                 if h<hmax
                    w=2*c*sinh(sqrt(delta)*h/2)/(sqrt(delta)*cosh(sqrt(delta)*h/2)-b*sinh(sqrt(delta)*h/2));
                 else
                    t=t(1,1:i);
                    yNumerical=yNumerical(1:1:i); 
                    Warn=['The algorithm terminates at step ',num2str(i-1),', approximating the solution on
                                the interval [0,',num2str((i-1)*h),'], because the method becomes undefined in
                                the next step due to the chosen stepsize. Suggestion: Rerun the program with
                                smaller stepsize'];
                    warning(Warn)
                    break
                 end
              elseif delta<=-4*zero
                     hmax=(2/sqrt(-delta))*Arccot(b/sqrt(-delta));
                     if h<hmax
                        w=2*c*sin(sqrt(-delta)*h/2)/(sqrt(-delta)*cos(sqrt(-delta)*h/2)-b*sin(sqrt(-delta)*h/2));
                     else
                        Warn=['The algorithm terminates at step ',num2str(i-1),', approximating the solution
                                    on the interval [0,',num2str((i-1)*h),'], because the method becomes
                                    undefined in the next step due to the chosen stepsize.
                                    Suggestion: Rerun the program with smaller stepsize'];
                        warning(Warn)
                        t=t(1,1:i);
                        yNumerical=yNumerical(1,1:i); 
                        break
                     end
              elseif abs(delta)<4*zero
                     w=2*c*h/(2-b*h)-h^3*c*delta/(3*(2-b*h)^2);
              end  
           end
    end
    yassume=w+yNumerical(i);
    if yassume>ymax||yassume<ymin
       Warn=['The algorithm terminates at step ',num2str(i-1),', approximating the solution on the interval
                   [0,',num2str((i-1)*h),'], because the approximate solution leaves the tracking window in the
                   next step. Suggestion: Rerun the program with larger y-viewing window.'];
       warning(Warn)
       t=t(1,1:i);
       yNumerical=yNumerical(1,1:i); 
       break
    else 
       yNumerical(i+1)=yassume;
    end   
end
\end{verbatim}
}

\medskip

\noindent
The function Arccot (required by main program):

{\tiny
\begin{verbatim}
function value=Arccot(x)
if x>=0
    value=acot(x);
else
    value=acot(x)+pi;
end
\end{verbatim}
}

\subsection*{Program that performs step size integrity check a priori}

{\ }

{\tiny
\begin{verbatim}
% Program for apriori check of hmax
%
% Instructions:
%
% 1) Users have to choose a zero tolerance. Any values that are in magnitude
%    less than that value are numerically zero.
% 2) Users have to specify the objective ODE.
% 3) Users have to specify a viewing window [0,T] in time and [ymin,ymax] for
%    the observed range of values of the solution.
% 4) Program requires the function MAX, provided separately.

%
% User specifications:
%

% Set zero tolerance
zero=1e-14;
% Use symbolic engine for y
syms y
% Define objective ODE
dydt=exp(y);
% Set viewing window
ymin=0;
ymax=5;
T=5; %time interval [0,T] 

a=diff(dydt,2)/2;  
b=diff(dydt,1);
c=dydt;
delta=b^2-4*a*c;
s=b^2+abs(delta);
% The separate function MAX is using the standard matlab function 'fminbnd'
bmax=MAX(b,ymin,ymax);
smax=MAX(s,ymin,ymax);
if (smax>zero)&&(bmax>zero)
    hmax=min([2/sqrt(smax) (2-zero)/bmax T]);
elseif  (smax>zero)&&(bmax<=zero)
    hmax=min([2/sqrt(smax) T]);
else
    hmax=T;
end
hmaxOUT=['Suggest stepsize to be less than ',num2str(hmax)];
disp(hmaxOUT)
\end{verbatim}
}

\medskip

\noindent
The function MAX (required for a priori integrity checks on the step size):

{\tiny
\begin{verbatim}
function maxvalue=MAX(Function,leftBound,rightBound)
syms y
NewFunction=Function*(-1);
min=fminbnd(matlabFunction(NewFunction),leftBound,rightBound);
maxInTheMiddle=min*(-1);
valueAtLeftEndpoint=double(subs(Function,y,leftBound));
valueAtRightEndpoint=double(subs(Function,y,rightBound));
Compare=[maxInTheMiddle valueAtLeftEndpoint valueAtRightEndpoint];
maxvalue=max(Compare);
end
\end{verbatim}
}


\end{appendix}



\end{document}